\def\PSforPDF#1{#1}
\def\calT{\mathcal{T}}
\def\calS{\mathcal{S}}
\def\calE{\mathcal{E}}
\def\LS{T} 
\def\C{\mathrm{C}}
\def\W{\mathrm{W}}
\def\Ell{\mathrm{L}}
\def\dom{D}
\def\dd{\mathrm{d}}
\def\ee{\mathrm{e}}
\def\NN{\mathbb{N}}
\def\RR{\mathbb{R}}
\def\dt{\tfrac{\dd}{\dd t}}
\def\Id{I}
\def\vect#1#2{\binom{#1}{#2}}
\def\tvect#1#2{\tbinom{#1}{#2}}
\newtheorem{theorem}{Theorem}[section]
\theoremstyle{definition}
\newtheorem{assumption}[theorem]{Assumption}
\newtheorem{example}[theorem]{Example}
\begin{document}

\title[Splitting for delay equation]{Operator splitting for nonautonomous delay equations}

\author[A. B\'{a}tkai]{Andr\'{a}s B\'{a}tkai}
\address{E\"otv\"os Lor\'and University, Institute of Mathematics, 1117 Budapest, P\'{a}zm\'{a}ny P. s\'{e}t\'{a}ny 1/C, Hungary.}
\email{batka@cs.elte.hu}

\author[P. Csom\'{o}s]{Petra Csom\'{o}s}
\address{Leopold--Franzens--Universit\"{a}t Innsbruck, Institut f\"{u}r Mathematik, Technikerstra{\ss}e 13, 6020 Innsbruck, Austria.}
\email{petra.csomos@uibk.ac.at}

\author[B. Farkas]{B\'{a}lint Farkas}
\address{E\"otv\"os Lor\'and University, Institute of Mathematics, 1117 Budapest, P\'{a}zm\'{a}ny P. s\'{e}t\'{a}ny 1/C, Hungary.}
\email{fbalint@cs.elte.hu}

\subjclass{47D06, 47N40, 65J10, 34K06}

\date\today

\begin{abstract}
We provide a general product formula for the solution of nonautonomous abstract delay equations. After having shown the convergence we obtain estimates on the order of convergence for differentiable history functions. Finally, the theoretical results are demonstrated by some typical numerical examples.
\end{abstract}
\allowdisplaybreaks
\maketitle


\section{Introduction}

Operator splitting is a widely used time discretization method for the numerical solution of complicated equations. The importance and main applications of these procedures is described, for example, in the monographs by Farag\'{o} and Havasi \cite{Farago-Havasi_book}, Holden et al.~\cite{Holden-Karlsen-Lie-Risebro} and Lubich \cite{Lubich08}.

\medskip\noindent The present paper investigates a special operator splitting for a class of nonautonomous delay differential equations. This method, which can be applied to equations with distributed delays very effectively, was first investigated in Csom\'{o}s and Nickel \cite{Csomos-Nickel} and in B\'{a}tkai, Csom\'{o}s and Nickel \cite{Batkai-Csomos-Nickel} in the autonomous case. Recall from the Introduction in Bellen and Zennaro \cite{Bellen-Zennaro} that delay equations with distributed delay, especially those where the delay term is not separated from zero, are particularly difficult to solve numerically. Nevertheless, as we shall see, splitting methods work quite well even in the latter case.

\medskip\noindent To motivate this approach, let us consider the following equation.
\begin{equation}
\left\{
\begin{aligned}
\dot u(t)&=b(t)u(t)+\displaystyle\int_{-1}^{0} \mu(t,\sigma)u(t+\sigma)\dd\sigma, \qquad t\ge s, \\
u(s)&=x\in\RR, \\
u(s+\sigma)&=f(\sigma),\qquad \sigma\in [-1,0],
\end{aligned}
\right.
\nonumber
\end{equation}
where $b\in \C^1_b(\RR)$, $\mu\in\Ell^\infty(\RR\times [-1,0])$, and $t\mapsto \mu(t,\sigma)\in \C^1_b(\RR)$ for all $\sigma\in[-1,0]$. In this case the delay operator $\Phi(t)$ (cf. equation \eqref{delay} below) is defined by
\begin{equation}
\Phi(t) g:=\int_{-1}^{0} \mu(t,\sigma)g(\sigma)\dd\sigma
\nonumber
\end{equation}
for all $g\in\Ell^1([-1,0])$.

\noindent Choosing a time step $h\in(0,1]$, first we start with  $x_0:=x$ and $f_0:=f$. Then we set
\begin{equation*}
x_1:= \ee^{hb(s)}(x_0+h\Phi(s)f_0)
\end{equation*}
and
\begin{equation*}
f_1(\sigma):= \begin{cases} \ee^{(h+\sigma)b(s)}(x_0+h\Phi(s)f_0), \quad \sigma\in [-h,0], \\ f_0(h+\sigma),\quad \sigma\in [-1,-h).\end{cases}
\end{equation*}
In the next step we repeat this procedure and replace $x_0$ with $x_1$, $f_0$ with $f_1$, and $s$ with $s+h$. Hence, we obtain an iteration process where in the $k^{\text{th}}$ step we have
\begin{equation}\label{eq:splitting_procedure_a}
\left\{
\begin{aligned}
x_{k}&:= \ee^{hb(s+h(k-1))}(x_{k-1}+h\Phi(s+h(k-1))f_{k-1}),\\
f_k(\sigma)&:= \begin{cases} \ee^{(h+\sigma)b(s+h(k-1))}(x_{k-1}+h\Phi(s+h(k-1))f_{k-1}), \, \sigma\in [-h,0], \\ f_{k-1}(h+\sigma),\quad \sigma\in [-1,-h).\end{cases}
\end{aligned}
\right.
\end{equation}

\noindent The aim of the present paper is to prove the convergence of this procedure also for more general equations. We do this by introducing an abstract setup allowing us a general convergence result of the procedure. Further, for differentiable initial function (i.e., classical solutions) we also obtain estimates on the order of convergence. In the following, we summarize some basic facts on product formulae for abstract evolution equations. Then in Section 2 we rewrite the nonautonomous delay equation as an abstract evolution equation and prove the convergence of a general product formula. Section 3 is devoted to the investigation of the order of convergence, and in Section 4 we present numerical examples demonstrating the power of this approach.


\medskip First, let us recall some general facts about splitting of nonautonomous equations.
Consider an evolution equation of the form
\begin{equation}\label{eq:perturbed}\tag{NCP}
\begin{cases}
\dt{u}(t)=(A(t)+B(t))u(t), \quad t\geq s\in\RR, \\
u(s)=y\in X.
\end{cases}
\end{equation}
We always suppose that this  equation \eqref{eq:perturbed} is well-posed, i.e., there is an evolution family $W$ (also called semi-dynamical system) solving it.  By an \emph{evolution family}  we mean a strongly continuous two-parameter family of bounded linear operators $W$ with the algebraic property $W(t,s)W(s,r)=W(t,r)$ and $W(t,t)=\Id$ for all $r\leq s\leq t$. For  well-posedness of nonautonomous evolution equations we refer to the surveys in Nagel and Nickel \cite{Nagel-Nickel}, Pazy \cite[Chapter 5]{Pazy} or Schnaubelt in \cite[Chapter VI.9]{Engel-Nagel}.

\medskip\noindent  Now, a splitting formula, as below,  is especially useful, if we are able to solve effectively the \emph{autonomous} Cauchy problems
\begin{align*}
\dt u(t)&=A(r)u(t) \\
\dt v(t)&=B(r)v(t)
\end{align*}
with appropriate initial conditions for every fixed $r$. This is usually the case, if the operators $A(r)$ and $B(r)$ are partial differential operators with time dependent coefficients, or time dependent multiplication operators, and this is particulary so for delay equations considered in this paper.

\medskip\noindent The following general convergence result can be proved, see B\'{a}tkai et al. \cite[Theorem 4.2]{Batkai-Csomos-Farkas-Nickel1}.
\begin{theorem}\label{th:firstproduct} Suppose the following:
\begin{enumerate}[a)]
\item The nonautonomous Cauchy problem corresponding to the operators $(A(\cdot)+ B(\cdot))$ is well-posed. We denote the evolution family solving \eqref{eq:perturbed} by $W$.
\item The operators $A(t)$ and $B(t)$ are generators of $C_0$-semigroups of type $(M,\omega)$ ($M\geq 1$ and $\omega\in \RR$), and
\begin{equation*}
(\omega, \infty) \subset \rho(A(t))\cap\rho(B(t)) \quad \mbox{for all } t \in \RR
\end{equation*}
and
\begin{equation*} \label{stab}
\sup_{s\in\RR}\Bigl\| \prod_{p=n}^{1} \bigl(\ee^{\frac{t}{n}A(s- p\frac{t}{n})} \ee^{\frac{t}{n}B(s- p\frac{t}{n})}\bigr) \Bigr\|
\leq M \ee^{\omega t}.
\end{equation*}
\item  The maps
\begin{equation*}
t\mapsto R(\lambda,A(t))y,\qquad t\mapsto R(\lambda,B(t))y
\end{equation*}
are continuous for all $\lambda>\omega$ and $y\in X$.
\end{enumerate}
Then one has the convergence
\begin{equation}
\label{eq:ev_product}
W(t,s)y= \lim_{n\to\infty} \prod_{i=0}^{n-1} \bigl(\ee^{\frac{t-s}{n}A(s+ i\frac{t-s}{n}} \ee^{\frac{t-s}{n}B(s+ i\frac{t-s}{n}}\bigr)y
\end{equation}
for all $y\in X$, locally uniformly in $s,t$ with $s\leq t$.
\end{theorem}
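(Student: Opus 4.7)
The plan is to reduce the nonautonomous problem to an autonomous one by means of the evolution semigroup technique, on which the classical Chernoff/Trotter--Kato machinery applies; the nonautonomous product formula is then recovered by evaluation at a single point.

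First I would work on the Banach space $\calE := \C_0(\RR, X)$ and introduce the evolution semigroup associated with $W$,
\begin{equation*}
(T(t) f)(s) := W(s, s-t)\,f(s-t),\qquad t\ge 0,\ s\in\RR,\ f\in\calE.
\end{equation*}
By hypothesis (a) and standard results on evolution semigroups (see e.g.\ Nagel--Nickel \cite{Nagel-Nickel}) this is a $C_0$-semigroup on $\calE$ whose generator is (the closure of) the formal operator $f\mapsto -f'+(A(\cdot)+B(\cdot))f(\cdot)$. Alongside it, the translation group $(T_0(t) f)(s) := f(s-t)$ is a $C_0$-semigroup on $\calE$, and the pointwise multiplication operators $(\mathcal{A} f)(s):= A(s)f(s)$ and $(\mathcal{B} f)(s):= B(s)f(s)$ generate $C_0$-semigroups given by $(\ee^{t\mathcal{A}} f)(s) = \ee^{t A(s)} f(s)$ and $(\ee^{t\mathcal{B}} f)(s) = \ee^{t B(s)} f(s)$; here hypothesis (c) combined with the Hille--Yosida-type bound in (b) is exactly what guarantees strong continuity of these multiplication semigroups on $\calE$.

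A short telescoping computation yields the pointwise identity
\begin{equation*}
\bigl(\bigl[T_0(h)\,\ee^{h\mathcal{A}}\,\ee^{h\mathcal{B}}\bigr]^n f\bigr)(t)
 \;=\; \Bigl(\prod_{p=1}^{n} \ee^{h A(t-ph)}\,\ee^{h B(t-ph)}\Bigr) f(t-nh),
\end{equation*}
so that with $h := (t-s)/n$ and any $f\in\calE$ chosen with $f(s)=y$, the right-hand side of \eqref{eq:ev_product} equals $\bigl([T_0(h)\,\ee^{h\mathcal{A}}\,\ee^{h\mathcal{B}}]^n f\bigr)(t)$ after the reindexing $i=n-p$. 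The stability hypothesis in (b) translates directly to the uniform estimate $\|[T_0(h)\ee^{h\mathcal{A}}\ee^{h\mathcal{B}}]^n\|_{\calE}\le M\ee^{\omega n h}$ on $\calE$, independently of $s$, which is the stability input required by any product formula.

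It then remains to prove $[T_0(h)\ee^{h\mathcal{A}}\ee^{h\mathcal{B}}]^n \to T(t-s)$ strongly on $\calE$, for which I would invoke an autonomous Chernoff/Trotter--Kato theorem. Stability is already in place, while consistency
\begin{equation*}
\lim_{h\to 0^+}\tfrac{1}{h}\bigl(T_0(h)\ee^{h\mathcal{A}}\ee^{h\mathcal{B}} - \Id\bigr) f \;=\; -f' + \mathcal{A} f + \mathcal{B} f
\end{equation*}
is straightforward on a sufficiently rich core, e.g.\ $\C^1$-functions $f$ with $f(s)\in \dom(A(s))\cap \dom(B(s))$ at every $s$. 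The main obstacle will be the \emph{identification step}: one must verify that such an algebraic core is indeed contained in the domain of the generator of $T$ and that the semigroup generated by the closure of $-\tfrac{\dd}{\dd s}+\mathcal{A}+\mathcal{B}$ coincides with $T$. This I would handle via the uniqueness of solutions to \eqref{eq:perturbed} granted by (a) together with a standard core argument. Evaluating the resulting strong convergence on $\calE$ at the point $t$ then produces \eqref{eq:ev_product}, and the locally uniform convergence in $s,t$ follows from the strong continuity of $T$.
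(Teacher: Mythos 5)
The paper itself does not prove this theorem; it is quoted from B\'atkai--Csom\'os--Farkas--Nickel \cite{Batkai-Csomos-Farkas-Nickel1}, and the proof there indeed proceeds through evolution semigroups on $\C_0(\RR,X)$, so your overall architecture matches the reference. Your telescoping identity is correct: with $h=(t-s)/n$, the reindexing $i=n-p$ turns $\bigl([T_0(h)\ee^{h\mathcal{A}}\ee^{h\mathcal{B}}]^n f\bigr)(t)$ into the product in \eqref{eq:ev_product} applied to $f(s)=y$, and the stability hypothesis in (b) transfers verbatim to a uniform bound on $\calE$ because the operator norm on $\C_0(\RR,X)$ of a pointwise multiplication operator is the supremum of the pointwise norms. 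Hypotheses (b) and (c) do give strong continuity and the $C_0$-invariance of the multiplication semigroups, as you say.

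The weak point is exactly the step you flag, and you dispatch it too quickly. For a Chernoff/Trotter--Kato argument you need to know that the class of test functions on which you verify consistency is a \emph{core for the generator of the evolution semigroup} $T$. That generator is only implicitly defined (as the closure of $f\mapsto -f'+(A(\cdot)+B(\cdot))f$ on a suitable domain), and nothing in hypotheses (a)--(c) hands you a concrete core for it: well-posedness of \eqref{eq:perturbed} gives uniqueness of mild solutions, but it does not by itself tell you that smooth compactly supported $X$-valued functions with $f(s)\in \dom(A(s))\cap\dom(B(s))$ belong to $\dom(G_T)$, let alone form a core. This is not a ``standard core argument''; it is precisely the substantive part of the cited proof, which is organized around a Chernoff-type approximation theorem formulated so as to sidestep the need to identify a core explicitly (one shows stability plus consistency against the evolution family itself, and then identifies the limit via uniqueness of the evolution problem rather than via a generator/core identification). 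If you want to run your version you must either prove the core property, or replace the classical Chernoff theorem by such a variant; as written, the identification step is a genuine gap, not a routine detail. The final claim that locally uniform convergence in $(s,t)$ ``follows from strong continuity of $T$'' also needs a sentence: it is the $\sup$-norm on $\calE$ in the Chernoff convergence that gives uniformity in $s$, and uniformity in $t$ over compacts is part of the Chernoff conclusion, not a consequence of strong continuity alone.
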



\section{Splitting for the delay equation}
Consider the \textbf{abstract delay equation} in the following form:
\begin{equation}
\left\{
\begin{aligned}
\tfrac{\dd }{\dd t}u(t)&=A(t)u(t)+\Phi(t) u_t, \qquad t\ge s, \\
u(s)&=x\in X,\, s\in\RR, \\
u_s&=f\in\Ell^1([-1,0];X)
\end{aligned}
\right.
\label{delay}
\end{equation}
on the Banach space $X$, where $A(t)$ generates a strongly continuous contraction semigroup on $X$ and $\Phi(t):\Ell^{1}([-1,0];X) \to X$ is a bounded and linear operator depending continuously on the parameter $t\in\RR$. The \textbf{history function} $u_t$ is defined by $u_t(\sigma):=u(t+\sigma)$ for $\sigma\in[-1,0]$.  Note that point delays are excluded from this context, but \emph{distributed delays}, even those that live up to $0$, are contained in this setting.

\medskip\noindent In order to rewrite \eqref{delay} as an abstract Cauchy problem, we take the product space $\mathcal{E}:=X\times \Ell^1([-1,0];X)$ equipped with $1$-sum norm, and the new unknown function as
\begin{equation}
t\mapsto\mathcal{U}(t):=\binom{u(t)}{u_t}\in\mathcal{E}.
\nonumber
\end{equation}
Then \eqref{delay} can be written as an abstract Cauchy problem on the space $\mathcal{E}$ in the following way:
\begin{equation}
\left\{
\begin{aligned}
\tfrac{\dd}{\dd t}\mathcal{U}(t)&=\mathcal{G}(t)\mathcal{U}(t), \qquad t\ge s, \\
\mathcal{U}(s)&=\tvect{x}{f}\in\mathcal{E},
\end{aligned}
\right.
\label{acp_delay}
\end{equation}
where the operator $\mathcal{G}(t)$ is given by the matrix
\begin{equation}
\mathcal{G}(t):=\begin{pmatrix} A(t) & \Phi(t) \\ 0 & \frac{\dd}{\dd\sigma} \end{pmatrix}
\end{equation}
on the domain
\begin{equation}
\dom(\mathcal{G}(t)):=\left\{\tvect{x}{f}\in \dom(A(t))\times \W^{1,1}([-1,0];X): \,f(0)=x  \right\}.
\nonumber
\end{equation}
\noindent As in B\'{a}tkai and Piazzera \cite[Corollary 3.5, Proposition 3.9]{Batkai-Piazzera} one can show that the delay equation \eqref{delay} and the abstract Cauchy problem \eqref{acp_delay} are equivalent, i.e., they have the same solutions. More precisely, the first coordinate of the solution of \eqref{acp_delay} always solves \eqref{delay}, i.e.,
 \begin{equation*}
 u(t)=\pi_1 \mathcal{U}(t),
 \end{equation*}
\noindent where $\pi_1$ is the projection to the first coordinate in $\mathcal{E}$.
Due to this equivalence, the delay equation is well-posed if and only if the operator $\mathcal{G}(t)$ generates an evolution family on the space $\mathcal{E}$.

\noindent Since the delay operators $\Phi(t)$ are bounded, the delay equation \eqref{delay} is well-posed, which follows form a much more general well-posedness result by Hadd, Rhandi and Schnaubelt \cite[Proposition 3.5]{Hadd-Rhandi-Schnaubelt}. That is, there is an \emph{evolution family} $\mathcal{W}$ such that for fixed $s$ and $t\geq s$ the function $u^{(s)}(t)=\pi_1\mathcal{W}(t,s)\tvect{x}{f}$ is a solution of \eqref{delay} for $\tvect{x}{f}\in \dom(\mathcal{G}(s))$. In particular, for $0\leq s\leq t$ we have
\begin{alignat*}{2}
&&\mathcal{W}(t,s)\tvect{x}{f}&=\vect{u^{(s)}(t)}{u^{(s)}_{t}},\\
&\mbox{where $u^{(s)}$ fulfills}&\qquad u^{(s)}(t)&=x+\int_s^{t} A(r)u^{(s)}(r)\dd r+\int_s^{t} \Phi(r) u^{(s)}_{r}\dd r\\
&\mbox{and}&\qquad u_t^{(s)}(r)&=u^{(s)}(t+r)\quad\mbox{for $r\in[-1,0]$, $t+r\geq s$}.
\end{alignat*}
Furthermore, we have the next relation
\begin{equation*}
u_t^{(s)}(r)=f(r+t-s) \quad \mbox{for $t+r<s$}.
\end{equation*}

\noindent Now we make the main assumptions implying the convergence of the splitting procedure. In the autonomous case, i.e., when $A(t)=A$ and $\Phi(t)=\Phi$, the following was  investigated in the papers by Csom\'{o}s and Nickel \cite{Csomos-Nickel} and B\'{a}tkai, Csom\'{o}s, and Nickel \cite{Batkai-Csomos-Nickel}.
\begin{assumption}\label{ass:2}
\begin{enumerate}[a)]
\item The operators $A(s)$ generate the strongly continuous contraction semigroups $(V^{(s)}(t))_{t\ge 0}$ on $X$ for all $s\in\RR$.
\item $\dom(A(s))=:D$ for all $s\in \RR$ and the function $s\mapsto R(1,A(s))y$ is continuous for all $y\in X$.
\item The delay operators $\Phi(s):\ \Ell^1([-1,0];X) \to X$ are bounded for all $s\in \RR$.
\item The function $s\mapsto \Phi(s)f$ is bounded and continuous for every $f\in \Ell^1([-1,0];X)$.
\end{enumerate}
\label{ass1}
\end{assumption}

\medskip\noindent  Let us now describe in detail the approximation procedure we will apply. We split the operator in \eqref{acp_delay} as
\begin{equation}
\mathcal{G}(t)=\mathcal{A}(t)+\mathcal{B}(t),
\nonumber
\end{equation}
where the sub-operators have the form
\begin{equation}\arraycolsep2pt
\begin{array}{rcll}\arraycolsep1em
\mathcal{A}(r)&:=&\begin{pmatrix} A(r) & 0 \\ 0 & \frac{\dd}{\dd\sigma} \end{pmatrix}, & \quad \dom(\mathcal{A}(r)):=\dom(\mathcal{G}(r)), \medskip \\
\mathcal{B}(r)&:=&\begin{pmatrix} 0 & \Phi(r) \\ 0 & 0 \end{pmatrix}, & \quad \dom(\mathcal{B}(r)):=\mathcal{E}.
\end{array}
\label{matrices-delay}
\end{equation}
Since $A(r)$ is a generator and $\Phi(r)$ is bounded, the operators $\mathcal{A}(r)$ and $\mathcal{B}(r)$ generate the strongly continuous semigroups $(\calT^{(r)}(t))_{t\ge 0}$ and $(\calS^{(r)}(t))_{t\ge 0}$, respectively. It is shown in B\'{a}tkai and Piazzera \cite[Theorem 3.25]{Batkai-Piazzera} that $\calT^{(r)}$ is given by
\begin{equation}
\calT^{(r)}(t):=\begin{pmatrix} V^{(r)}(t) & 0 \\ V^{(r)}_t & \LS(t) \end{pmatrix},
\nonumber
\end{equation}
where $(\LS(t))_{t\ge 0}$ is the nilpotent left shift semigroup defined by
\begin{equation}
(\LS(t)f)(\sigma):=
\begin{cases}
f(t+\sigma), & \quad\mbox{if}\quad \sigma\in[-1,-t), \\
0, & \quad\mbox{if}\quad \sigma\in[-t,0]
\end{cases}
\nonumber
\end{equation}
for all $f\in\Ell^1([-1,0];X)$, and $V^{(r)}_t$ is
\begin{equation}
(V^{(r)}_t x)(\sigma):=
\begin{cases}
V^{(r)}(t+\sigma)x, & \quad\mbox{if}\quad \sigma\in[-t,0], \\
0, & \quad\mbox{if}\quad \sigma\in[-1,-t)
\end{cases}
\nonumber
\end{equation}
for all $x\in X$. Since $\Phi(r)$ is a bounded operator, $\mathcal{B}(r)$ is also bounded on $\mathcal{E}$. Therefore, the semigroup $\calS^{(r)}$ generated by $\mathcal{B}(r)$ takes the form
\begin{equation}
\calS^{(r)}(t):=\ee^{t\mathcal{B}(r)}=\mathcal{I}+t\mathcal{B}(r)=\begin{pmatrix} I & t\Phi(r) \\ 0 & \widetilde I \end{pmatrix},
\nonumber
\end{equation}
where $I$, $\widetilde I$, and $\mathcal{I}$ denote the identity operators on $X$, $\Ell^1([-1,0];X)$, and $\mathcal{E}$, respectively. We then have the following general convergence result explaining the convergence of the procedure described in \eqref{eq:splitting_procedure_a}.

\begin{theorem}\label{thm:general}
Under  Assumption \ref{ass1} the solution of the abstract delay equation \eqref{delay} is given by the formula
\begin{equation*}
u^{(s)}(t)=\pi_1 \lim_{n\to\infty} \prod_{p=0}^{n-1} \begin{pmatrix} V^{(s+p\frac{t-s}{n})}(\tfrac{t-s}{n}) & 0 \\ V^{(s+p\frac{t-s}{n})}_{\frac{t-s}{n}} & \LS(\tfrac{t-s}{n}) \end{pmatrix} \begin{pmatrix} I & \tfrac{t-s}{n}\Phi(s+p\tfrac{t-s}{n}) \\ 0 & \widetilde I \end{pmatrix} \tvect{x}{f}.
\end{equation*}
\end{theorem}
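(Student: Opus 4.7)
The plan is to apply Theorem~\ref{th:firstproduct} to the splitting $\mathcal{G}(t)=\mathcal{A}(t)+\mathcal{B}(t)$ on the product space~$\mathcal{E}$, with the evolution family $\mathcal{W}$ solving~\eqref{acp_delay} playing the role of~$W$. Once hypotheses (a)--(c) of Theorem~\ref{th:firstproduct} are verified, the limit~\eqref{eq:ev_product} applied at the vector $\tvect{x}{f}$, together with the explicit matrix forms of $\calT^{(r)}(t)$ and $\calS^{(r)}(t)$ stated just before the theorem and the identity $u^{(s)}(t)=\pi_1\mathcal{W}(t,s)\tvect{x}{f}$ already recorded in the excerpt, yields exactly the claimed formula. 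Hypothesis~(a) is nothing but well-posedness of~\eqref{acp_delay}, which holds under Assumption~\ref{ass1} by \cite[Proposition~3.5]{Hadd-Rhandi-Schnaubelt}.

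For hypothesis~(b), I would first observe that Assumption~\ref{ass1}(d) together with the uniform boundedness principle gives $K:=\sup_{r\in\RR}\|\Phi(r)\|<\infty$, so $\|\calS^{(r)}(t)\|\leq 1+tK\leq\ee^{tK}$, and since $\mathcal{B}(r)^2=0$ the spectrum of $\mathcal{B}(r)$ is $\{0\}$. A direct computation in the $1$-sum norm of $\mathcal{E}$, using the contractivity of $V^{(r)}$ together with the obvious estimates $\|V^{(r)}_t x\|_{\Ell^1}\leq t\|x\|$ and $\|\LS(t)f\|_{\Ell^1}\leq\|f\|$, yields $\|\calT^{(r)}(t)\|\leq 1+t\leq \ee^{t}$ for $t\in[0,1]$, so in particular $(1,\infty)\subset\rho(\mathcal{A}(r))$. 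Multiplying these two uniform bounds telescopically with $nh=t$ gives
\begin{equation*}
\sup_{s\in\RR}\Bigl\|\prod_{p=n}^{1}\calT^{(s-ph)}(h)\,\calS^{(s-ph)}(h)\Bigr\|\leq (1+h)^n(1+hK)^n\leq \ee^{(1+K)t},
\end{equation*}
which is the required stability of the splitting with $(M,\omega)=(1,1+K)$.

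For hypothesis~(c), continuity of $r\mapsto R(\lambda,\mathcal{B}(r))\tvect{y}{g}$ is immediate from the two-term Neumann series (since $\mathcal{B}(r)^2=0$) combined with Assumption~\ref{ass1}(d). For $R(\lambda,\mathcal{A}(r))$, the block-triangular form reduces the resolvent equation $(\lambda-\mathcal{A}(r))\tvect{u}{v}=\tvect{y}{g}$ to $u=R(\lambda,A(r))y$ together with the scalar ODE $\lambda v-v'=g$ on $[-1,0]$ with boundary condition $v(0)=u$, which admits an explicit variation-of-constants expression in which $r$ enters only through the single quantity $u$. Assumption~\ref{ass1}(b), combined with a standard resolvent-identity bootstrap to pass from $\lambda=1$ to general $\lambda>\omega$, then transfers continuity to the full block resolvent. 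The main technical obstacle is precisely this step: ensuring that the $r$-dependence of $\dom(\mathcal{A}(r))$ through the boundary condition $v(0)=u$ causes no trouble, which is mild here because $\dom(A(r))=D$ is constant in~$r$ by Assumption~\ref{ass1}(b). With all three hypotheses verified, Theorem~\ref{th:firstproduct} delivers the product formula.
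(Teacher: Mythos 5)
Your proof is correct and takes essentially the same route as the paper: reduce to Theorem~\ref{th:firstproduct} by verifying well-posedness via Hadd--Rhandi--Schnaubelt, stability via the norm bounds $\|\calT^{(r)}(t)\|\leq 1+t$ and $\|\calS^{(r)}(t)\|\leq 1+t\sup_{r}\|\Phi(r)\|$, and resolvent continuity via Assumption~\ref{ass1}. You spell out condition~(c) for the block operator $\mathcal{A}(r)$ in somewhat more detail (block-triangular reduction, constancy of the domain) than the paper does, but the underlying argument is identical.
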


\begin{proof}
The convergence is a direct consequence of Theorem \ref{th:firstproduct} applied to the generators $\mathcal{A}(r)$ and $\mathcal{B}(r)$. As mentioned above, by Hadd, Rhandi and Schnaubelt \cite{Hadd-Rhandi-Schnaubelt} the Cauchy problem associated to the operator $\mathcal{A}(t)+\mathcal{B}(t)$ is well-posed. Applying Theorem \ref{th:firstproduct}, we only have to check the stability assumption. Indeed, Assumption \ref{ass:2}.a) and b) and an argument using power series imply the continuity of $s\mapsto R(\lambda,A(s))$ for $\lambda$ in the connected component of $1$, hence condition c) on  Theorem \ref{th:firstproduct} is fulfilled. Now, to see stability one can use the same arguments as the ones appearing in the proof of Csom\'{o}s and  Nickel \cite[Theorem 4.2]{Csomos-Nickel}, we get that
\begin{align*}
\|\calT^{(r)}(t)\tvect{x}{f}\|_\calE &=\|V^{(r)}(t)x\|+\|V^{(r)}_tx+\LS(t)f\|_1\leq  \|x\|+\|f\|_1+t\|x\|\\
&\leq (1+t)(\|x\|+\|f\|_1)=(1+t)\|\tvect{x}{f}\|_1,
\end{align*}
hence
\begin{align*}
\|\calT^{(r)}(t)\tvect{x}{f}\|_\calE&\leq 1+t.
\intertext{One also obtains}
\|\calS^{(r)}(t)\| &\leq 1+t\|\Phi(r)\|.
\end{align*}
From this the stability can be obtained:
\begin{align*}
\Bigl\| \prod_{i=n}^{1} \begin{pmatrix} V^{(s-i\frac{t}{n})}(\frac{t}{n}) & 0 \\ V^{(s-i\frac{t}{n})}_{\frac{t}{n}} & \LS(\frac{t}{n}) \end{pmatrix} \begin{pmatrix} I & \frac{t}{n}\Phi(s-i\tfrac{t}{n}) \\ 0 & \widetilde I \end{pmatrix} \Bigr\| &\leq \Bigl(1+\frac{t}{n}\sup_{s\in\RR} \|\Phi(s)\|\Bigr)^n\Bigl(1+\frac{t}{n}\Bigr)^n \\
&\leq \ee^{\omega t},
\end{align*}
with $\omega = 1+\sup_{s\in\RR} \|\Phi(s)\|$.
\end{proof}


\section{Order of convergence}
We now investigate the order of convergence of the splitting method from the previous section. To this end, we have to make further assumptions on the operators involved, and of course, some regularity on the initial data need to be assumed, too.

\begin{assumption}\label{ass:convrate}~
\begin{enumerate}[a)]
\item The operator $A(s)$ is bounded and generates the  strongly continuous contraction semigroup $(V^{(s)}(t))_{t\ge 0}$ on $X$.
\item The delay operators $\Phi(s):\ \Ell^1([-1,0];X) \to X$ are bounded for all $s\in \RR$.
\item The function $s\mapsto A(s)x$ is bounded and locally Lipschitz continuous, i.e., for all ${T_0}>0$ there is $L_{T_0}\geq 0$ such that $$\|A(s)x-A(t)x\|\leq L_{T_0}\|x\||t-s|$$ for all $|t|,|s|\leq {T_0}$.
\item The function $s\mapsto \Phi(s)f$ is bounded and locally Lipschitz continuous, i.e., for all ${T_0}>0$ there is $L_{T_0}\geq 0$ such that $$\|\Phi(s)f-\Phi(t)f\|\leq L_{T_0}\|f\|_1|t-s|$$ for all $|t|,|s|\leq {T_0}$.
\end{enumerate}
\end{assumption}

\noindent These assumptions enable us to show the first order of convergence for classical solutions.
\begin{theorem}[{\bf Local error estimate}]\label{thm:approxt2}
Let $T_0>0$ be fixed. Then there is a constant $C>0$ such that
\begin{equation*}
\bigl\|\calT^{(s)}(h)\calS^{(s)}(h) \tvect{x}{f}-\mathcal{W}(s+h,s)\tvect{x}{f}\bigr\| \leq Ch^2\bigl(\|x\|+\|f\|_1+\|f'\|_1\bigr)
\end{equation*}
holds for all $h\in [0,1]$, $s\in [-T_0,T_0]$ and $f\in\W^{1,1}([-1,0];X)$, $x=f(0)$.
\end{theorem}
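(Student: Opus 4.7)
The plan is to compare the splitting step coordinate-by-coordinate with the exact evolution, exploiting the fact that $\mathcal{B}(s)^2=0$ makes $\calS^{(s)}(h)\tvect{x}{f}=\tvect{x+h\Phi(s)f}{f}$ exact, so that the entire approximation error is carried by the $\calT^{(s)}(h)$-step. Using the explicit block form of $\calT^{(s)}(h)$ recalled in Section~2, one obtains
\begin{equation*}
\calT^{(s)}(h)\calS^{(s)}(h)\tvect{x}{f}=\vect{V^{(s)}(h)(x+h\Phi(s)f)}{g_h},
\end{equation*}
where $g_h(\sigma)=V^{(s)}(h+\sigma)(x+h\Phi(s)f)$ on $[-h,0]$ and $g_h(\sigma)=f(h+\sigma)$ on $[-1,-h)$. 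On the other side, $\mathcal{W}(s+h,s)\tvect{x}{f}=\tvect{u(s+h)}{u_{s+h}}$, and the crucial observation is that both history components coincide with $f(h+\sigma)$ on $[-1,-h)$. Hence the error is concentrated in the first coordinate and in the second coordinate restricted to $[-h,0]$.

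These two contributions can be bundled into a single quantity
\begin{equation*}
\Delta(\eta):=V^{(s)}(\eta)(x+h\Phi(s)f)-u(s+\eta),\qquad \eta\in[0,h],
\end{equation*}
since the first-coordinate error equals $\Delta(h)$ and, after the substitution $\eta=h+\sigma$, the $\Ell^1$-norm of the second-coordinate error equals $\int_0^h\|\Delta(\eta)\|\,\dd\eta$. I would aim to prove
\begin{equation*}
\Delta(\eta)=(h-\eta)\Phi(s)f+R(\eta),\qquad \|R(\eta)\|\leq Ch^2\bigl(\|x\|+\|f\|_1+\|f'\|_1\bigr),
\end{equation*}
which immediately yields $\|\Delta(h)\|\leq Ch^2(\ldots)$ and $\int_0^h\|\Delta(\eta)\|\,\dd\eta\leq\tfrac{h^2}{2}\|\Phi(s)\|\,\|f\|_1+Ch^3(\ldots)$, so that adding both coordinates in the $1$-sum norm on $\calE$ produces the desired $Ch^2(\|x\|+\|f\|_1+\|f'\|_1)$ bound.

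To establish the key identity I would Taylor-expand both ingredients. Since $A(s)$ is bounded, $V^{(s)}(\eta)y=y+\eta A(s)y+R_1(\eta,y)$ with $\|R_1(\eta,y)\|\leq\tfrac{\eta^2}{2}\|A(s)\|^2\|y\|$, applied to $y=x+h\Phi(s)f$. For the exact solution the integral equation gives
\begin{equation*}
u(s+\eta)=x+\eta[A(s)x+\Phi(s)f]+\int_s^{s+\eta}\bigl[\bigl(A(r)u(r)-A(s)x\bigr)+\bigl(\Phi(r)u_r-\Phi(s)f\bigr)\bigr]\,\dd r.
\end{equation*}
The zeroth- and first-order terms cancel in $\Delta(\eta)$ up to the surviving $(h-\eta)\Phi(s)f$ (coming from $h\Phi(s)f$ in $y$ versus $\eta\Phi(s)f$ in the exact solution). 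The terms $(A(r)-A(s))x$ and $(\Phi(r)-\Phi(s))f$ are $O(r-s)$ by Assumption~\ref{ass:convrate}~c)--d), and $A(s)(u(r)-x)$ is $O(r-s)$ via $\|u(r)-x\|\leq(r-s)\|\dot u\|_\infty$ together with $\|\dot u\|_\infty\leq C(\|x\|+\|f\|_1)$, which holds because $\tvect{x}{f}\in\dom(\mathcal{G}(s))$ yields a classical solution whose derivative satisfies $\dot u(r)=A(r)u(r)+\Phi(r)u_r$, bounded by a constant depending only on $T_0$.

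The genuinely delicate piece---and the main obstacle---is the $\Ell^1$-estimate of $\|u_r-f\|_1$ for $r\in[s,s+h]$, which is precisely what introduces the $\|f'\|_1$ term and forces the hypothesis $f\in\W^{1,1}$. Splitting $u_r(\sigma)-f(\sigma)$ into its piece on $[-1,-\rho)$, equal to $f(\rho+\sigma)-f(\sigma)$ and bounded by the classical shift estimate $\|\tau_\rho f-f\|_1\leq\rho\|f'\|_1$, and its piece on $[-\rho,0]$, equal to $u(s+\rho+\sigma)-f(\sigma)$ and controlled via $f(0)=x$, $\|\dot u\|_\infty$ and another shift estimate, one obtains $\|u_r-f\|_1\leq 2\rho\|f'\|_1+\tfrac{\rho^2}{2}\|\dot u\|_\infty$ with $\rho=r-s$. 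Inserting this into $\|\Phi(r)u_r-\Phi(s)f\|\leq\|\Phi\|_\infty\|u_r-f\|_1+L_{T_0}\rho\|f\|_1$ and integrating over $[s,s+\eta]$ yields the claimed bound on $R(\eta)$, with constants uniform for $s\in[-T_0,T_0]$ by the local Lipschitz hypotheses. Assembling the first-coordinate and $\Ell^1$-norm contributions finishes the proof.
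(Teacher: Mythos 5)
Your proof is correct and follows essentially the same route as the paper: use $\mathcal{B}(s)^2=0$ to see that $\calS^{(s)}(h)$ is exact, Taylor-expand $V^{(s)}$ (valid since $A(s)$ is bounded), compare with the Volterra integral equation for $u$, control the time-dependence of $A$ and $\Phi$ by the Lipschitz hypotheses, and use the $\W^{1,1}$ shift estimate $\|\tau_\rho f-f\|_1\le\rho\|f'\|_1$ to handle $\|u_r-f\|_1$. Your bundling of the first coordinate and the $\Ell^1$-integrand into the single quantity $\Delta(\eta)$, with the observation that both reduce to the same estimate, is a modest organizational streamlining of what the paper does by noting that the second-coordinate integrand coincides with the already-proved first-coordinate bound up to the $rV^{(s)}(h+r)\Phi(s)f$ correction term, but the underlying estimates are identical.
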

\begin{proof}
Recall from the previous section that for $0\leq s\leq s+h$ we have
\begin{alignat*}{2}
&&\mathcal{W}(s+h,s)\tvect{x}{f}&=\vect{u^{(s)}(s+h)}{u^{(s)}_{s+h}},\\
&\mbox{where $u^{(s)}$ fulfills}&\qquad u^{(s)}(s+h)&=x+\int_s^{s+h} A(r)u^{(s)}(r)\dd r+\int_s^{s+h} \Phi(r) u^{(s)}_{r}\dd r\\
&\mbox{and}&\qquad u_h^{(s)}(r)&=u^{(s)}(h+r)\quad\mbox{for $h+r\geq s$}.
\end{alignat*}
From this it follows that $u^{(s)}:[s,s+1]\to X$ is Lipschitz continuous with constant $L(\|x\|+\|f\|_1)$ with $L$ dependent only on $\|A\|_\infty$ and $\|\Phi\|_\infty$.
Let us calculate the product
\begin{equation}\label{eq:ize}
\calT^{(s)}(h)\calS^{(s)}\tvect{x}{f}(h)=\vect{V^{(s)}(h)x+hV^{(s)}(h)\Phi(s) f}{V^{(s)}_hx+hV^{(s)}_h\Phi(s) f+\LS(h)f},
\end{equation}
and compare the first component here with $u^{(s)}(s+h)$. We can write
\begin{align*}
&V^{(s)}(h)x+hV^{(s)}(h)\Phi(s) f-u^{(s)}(s+h)\\
&\quad=V^{(s)}(h)x+hV^{(s)}(h)\Phi(s) f-x-\int_s^{s+h} A(r)u^{(s)}(r)\dd r-\int_s^{s+h} \Phi(r) u^{(s)}_r\dd r,
\end{align*}
and by writing out the series expansion of $V^{(s)}(h)$ we obtain
\begin{align}
\notag&V^{(s)}(h)x+hV^{(s)}(h)\Phi(s) f-u^{(s)}(s+h)\\
\notag&\qquad=x+hA(s)x+h\Phi(s) f+O(h^2) -x-\int_s^{s+h} A(r)u^{(s)}(r)\dd r-\int_{s}^{s+h}\Phi(r) u^{(s)}_r\dd r\\
\label{eq:33}&\qquad=hA(s)x-\int_s^{s+h} A(r) u^{(s)}(r)\dd r+h\Phi(s) f-\int_s^{s+h} \Phi(r)u^{(s)}_r\dd r +O(h^2),
\end{align}
where $O(h^2)$ denotes a term bounded in norm by $C\cdot h^2(\|x\|+\|f\|_1)$ with a  constant $C$ that depends only on the bounds of $\|A\|_\infty$ and $\|\Phi\|_\infty$. We now can write
\begin{align*}
&\Bigl\|hA(s)x-\int_s^{s+h}A(r) u^{(s)}(r)\dd r\Bigr\|\leq \int_s^{s+h} \|A(s)x-A(r)u^{(s)}(r)\|\dd r\\
 &\qquad\leq  \int_s^{s+h} \|A(s)x-A(r)x\|\dd r+\int_s^{s+h} \|A(r)x-A(r)u^{(s)}(r)\|\dd r\\
 &\qquad \leq (L'\|x\|+\|A\|_\infty L(\|x\|+\|f\|_1)) \int_s^{s+h} (r-s)\dd r=O(h^2),
\end{align*}
where $L'$ is the Lipschitz constant of $A$ on $[-T_0,T_0+1]$. A very similar reasoning works for the other two terms in \eqref{eq:33}:
\begin{align*}
&\Bigl\|h\Phi(s) f-\int_s^{s+h} \Phi(r)u^{(s)}_r\dd r\Bigr\|\leq \int_s^{s+h} \|\Phi(s) f-\Phi(r) u^{(s)}_r\|\dd r\\
&\quad\leq\int_s^{s+h} \|\Phi(s) f-\Phi(r) f\|\dd r+ \int_s^{s+h} \|\Phi(r)f-\Phi(r)u^{(s)}_r\|\dd r\\
&\quad\leq L'\|f\|_1\int_s^{s+h}(r-s)\dd r+ \|\Phi\|_\infty\int_s^{s+h} \int_{-1}^0\|f(\sigma)-u^{(s)}_r(\sigma)\|\dd\sigma \dd r\\
&\quad= \tfrac{L'}{2}\|f\|_1 h^2+\|\Phi\|_\infty\int_s^{s+h} \int_{-1}^{s-r}\|f(\sigma)-f(\sigma+r-s)\|\dd\sigma \dd r\\
&\qquad\qquad\qquad+\|\Phi\|_\infty\int_s^{s+h} \int_{s-r}^0\|f(\sigma)-u^{(s)}(\sigma+r)\|\dd\sigma \dd r\\
&\quad= \tfrac{L'}{2}\|f\|_1 h^2+\|\Phi\|_\infty\int_s^{s+h} \int_{-1}^{s-r}\|f(\sigma)-f(\sigma+r-s)\|\dd\sigma \dd r\\
&\qquad\qquad\qquad+\|\Phi\|_\infty \max\bigl\{\|f\|_\infty,\|u^{(s)}|_{[s,s+1]}\|_\infty\bigr\}\tfrac{h^2}2.
\intertext{Since $f\in \W^{1,1}([-1,0];X)$, we can continue the estimation:}
&\Bigl\|t\Phi(s) f-\int_s^{s+h} \Phi(r)u^{(s)}_r\dd r\Bigr\|\leq \tfrac{L'}{2}\|f\|_1h^2+\|\Phi\|_\infty\int_s^{s+h}\|f'\|_1(r-s) \dd r\\
&\qquad\qquad\qquad+\|\Phi\|_\infty\max\bigl\{\|f\|_\infty,\|u^{(s)}|_{[s,s+1]}\|_\infty\bigr\}\tfrac{h^2}{2}\\
&\quad\leq C (\|x\|+\|f\|_1+\|f'\|_1)h^2.
\end{align*}
By summing up we obtain the estimate:
$$
\bigl\|V^{(s)}(h)x+hV^{(s)}(h)\Phi(s) f-u^{(s)}(h+s)\bigr\|\leq C(\|x\|+\|f\|_1+\|f'\|_1)h^2.
$$
Hence the assertion for the first coordinate in \eqref{eq:ize} is proved.

\medskip\noindent
Let us now turn our attention to the second coordinate of \eqref{eq:ize}. For $r\in [-1,0]$ we have the following:
\begin{align*}
\intertext{If $h+r\geq 0$}
&\big(V^{(s)}_hx+hV^{(s)}_h\Phi(s) f+\LS(h)f-u^{(s)}_{s+h}\big)(r)\\
&\qquad\qquad=V^{(s)}(h+r)(x+h\Phi(s) f)+0-u^{(s)}(s+h+r),
\intertext{and if $h+r<0$}
&\big(V^{(s)}_hx+hV^{(s)}_h\Phi(s) f+\LS(h)f-u^{(s)}_{s+h}\big)(r)\\
&\qquad\qquad=0-0+f(h+r)-f(h+r)=0.
\end{align*}
We estimate the $\Ell^1$-norm of
$$
V^{(s)}_hx+hV^{(s)}_h\Phi(s) f+\LS(h)f-u^{(s)}_{s+h}.
$$
By using the pointwise estimate for the integrand proved above for the first coordinate, we obtain that
\begin{align*}
&\big\|V^{(s)}_hx+hV^{(s)}_h\Phi f+\LS(h)f-u^{(s)}_{s+h}\big\|_1\\
&=\int_{-h}^0\|V^{(s)}(h+r)x+hV^{(s)}(h+r)\Phi(s) f-u^{(s)}(s+h+r)\|\dd r\\
&\leq\int_{-h}^0\|V^{(s)}(h+r)x+(h+r)V^{(s)}(h+r)\Phi(s) f-u^{(s)}(s+h+r)\|\dd r\\
&\qquad+\int_{-h}^0\|rV^{(s)}(h+r)\Phi(s) f \|\dd r\\
&\leq C(\|x\|+\|f\|_1+\|f'\|_1)\int_{-h}^0 (h+r)^{2}\dd r+\|\Phi\|_\infty \|f\|_1\frac{h^2}2\\
&= C(\|x\|+\|f\|_1+\|f'\|_1)h^2.
\end{align*}
The proof is hence complete.
\end{proof}
We now can  prove the first order convergence of the sequential splitting. Passing from local error estimates to convergence is done by the standard trick of telescopic summation.

\begin{theorem}
For every $T_0>0$ there is  constant $C>0$ such that  for all $f\in\W^{1,1}([-1,0];X)$ and $x=f(0)$ the inequality
\begin{equation*}
\Bigl\|\prod_{j=0}^{n-1} \calT^{(s+jh)}(h)\calS^{(s+ jh)}(h)\tvect{x}{f}-\mathcal{W}(t,s)\tvect{x}{f}\Bigr\| \leq \frac{C(t-s)^2}{n}\bigl(\|x\|+\|f\|_1+\|f'\|_1\bigr) \quad
\end{equation*}
holds  for all $s\in [-T_0,T_0]$, $t\in [s,s+T_0]$ and for all $n\in \NN$, where $h=\frac{t-s}{n}$.
\end{theorem}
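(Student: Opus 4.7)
The approach is the standard \emph{Lady Windermere's fan} telescoping: expand the global error as a sum of $n$ local errors, each controlled by Theorem~\ref{thm:approxt2}. Writing $P(r,h):=\calT^{(r)}(h)\calS^{(r)}(h)$ and $t_k:=s+kh$, one inserts the intermediate states $\mathcal{W}(t_k,s)\tvect{x}{f}$ and obtains
\begin{equation*}
\prod_{j=0}^{n-1} P(t_j,h) - \mathcal{W}(t,s) = \sum_{k=0}^{n-1}\Bigl(\prod_{j=k+1}^{n-1} P(t_j,h)\Bigr)\bigl[P(t_k,h)-\mathcal{W}(t_{k+1},t_k)\bigr]\mathcal{W}(t_k,s),
\end{equation*}
with the same ordering convention as in Theorem~\ref{thm:general} (empty product equal to the identity when $k=n-1$).

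Next I would plug in the two main tools. The stability estimate carried out in the proof of Theorem~\ref{thm:general} yields $\|\prod_{j=k+1}^{n-1}P(t_j,h)\|\leq \ee^{\omega T_0}$ uniformly in $k,n$, with $\omega=1+\sup_{r}\|\Phi(r)\|$. For the bracketed factor I apply Theorem~\ref{thm:approxt2} centered at $t_k$ to the input $\mathcal{W}(t_k,s)\tvect{x}{f}=\tvect{u^{(s)}(t_k)}{u^{(s)}_{t_k}}$, which satisfies $u^{(s)}_{t_k}(0)=u^{(s)}(t_k)$ by construction, giving
\begin{equation*}
\bigl\|[P(t_k,h)-\mathcal{W}(t_{k+1},t_k)]\mathcal{W}(t_k,s)\tvect{x}{f}\bigr\|\leq Ch^2\bigl(\|u^{(s)}(t_k)\|+\|u^{(s)}_{t_k}\|_1+\|(u^{(s)}_{t_k})'\|_1\bigr).
\end{equation*}

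The main obstacle, and the real work beyond the telescoping bookkeeping, is a uniform \emph{a priori} bound on the propagated data,
\begin{equation*}
\sup_{r\in[s,s+T_0]}\bigl(\|u^{(s)}(r)\|+\|u^{(s)}_{r}\|_1+\|(u^{(s)}_{r})'\|_1\bigr)\leq K\bigl(\|x\|+\|f\|_1+\|f'\|_1\bigr),
\end{equation*}
with $K$ depending only on $T_0$, $\|A\|_\infty$ and $\|\Phi\|_\infty$. The first two terms are controlled by a Gronwall argument on the integral equation recalled at the start of the proof of Theorem~\ref{thm:approxt2}. For the derivative of the history segment I split at $\sigma=s-r$: on $\sigma\in(\max\{-1,s-r\},0]$ one uses $\partial_\sigma u^{(s)}_r(\sigma)=A(r+\sigma)u^{(s)}(r+\sigma)+\Phi(r+\sigma)u^{(s)}_{r+\sigma}$, whose $\Ell^1$-norm is controlled by the bound just obtained together with Assumption~\ref{ass:convrate}.a),b); on $\sigma\in[-1,s-r)$ (empty once $r\geq s+1$) one has $\partial_\sigma u^{(s)}_r(\sigma)=f'(r+\sigma-s)$, whose $\Ell^1$-norm is at most $\|f'\|_1$. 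The boundedness of $A(r)$ in Assumption~\ref{ass:convrate}.a) is essential here: it prevents any loss of derivative in these pointwise identities.

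Finally, inserting the stability bound, the local estimate, and the a priori bound into the telescoping identity, and using $nh=t-s\leq T_0$, yields
\begin{equation*}
\Bigl\|\prod_{j=0}^{n-1}P(t_j,h)\tvect{x}{f}-\mathcal{W}(t,s)\tvect{x}{f}\Bigr\|\leq \ee^{\omega T_0}CK\,n h^{2}\bigl(\|x\|+\|f\|_1+\|f'\|_1\bigr)=\frac{C'(t-s)^2}{n}\bigl(\|x\|+\|f\|_1+\|f'\|_1\bigr),
\end{equation*}
which is the claimed first-order estimate.
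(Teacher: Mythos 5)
Your proof is correct and follows the same telescoping (Lady Windermere's fan) strategy as the paper, but you have filled in two steps that the paper's own proof leaves implicit. The paper never explicitly invokes the stability bound on the partial products $\prod_{j=k+1}^{n-1}\calT^{(s+jh)}(h)\calS^{(s+jh)}(h)$, which you correctly supply from the estimate in the proof of Theorem~\ref{thm:general}. More substantially, the paper asserts only $\|x_k\|,\|f_k\|_1\leq C\|\tvect{x}{f}\|$ and then immediately applies Theorem~\ref{thm:approxt2} to conclude a local bound in terms of $\|x\|+\|f\|_1+\|f'\|_1$; this tacitly uses a bound on $\|f_k'\|_1=\|(u^{(s)}_{s+kh})'\|_1$ that is not established there (the final display of the paper's proof even leaves the $k$-indexed quantities on the right-hand side). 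You correctly identify this as the real substance of the argument and prove it: on the already-propagated part of the history segment you differentiate the solution and invoke the boundedness of $A(\cdot)$ and $\Phi(\cdot)$, while on the untouched part you recover $\|f'\|_1$ directly. This is exactly right, and it is also the point where Assumption~\ref{ass:convrate}.a) (boundedness of $A(s)$) is genuinely needed. In short: same route, but your writeup pinpoints and closes the gap that the paper glosses over.
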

\begin{proof}
Take $n_0\in \NN$ so large that $T_0/n_0<1$ holds. Fix  $n\geq n_0$, $t,s$, and $h$ as in the assertion.  By telescopic summation we obtain
\begin{align}
\label{eq:telenonaut}
& \prod_{j=0}^{n-1}\calT^{(s+ jh)}_{}(h)\calS^{(s+ jh)}(h)\tvect{x}{f}-\mathcal{W}(t,s)\tvect{x}{f}\\
\notag&\qquad=\prod_{j=0}^{n-1}\calT^{(s+ jh)}_{}(h)\calS^{(s+ jh)}(h)\tvect{x}{f}-\prod_{j=0}^{n-1}\mathcal{W}(s+(j+1)h,s+jh)\tvect{x}{f}\\
\notag&\qquad=\sum_{k=0}^{n-1}\Biggl(\Bigl(\prod_{j=k+1}^{n-1} \calT^{(s+ jh)}(h)\calS^{(s+ jh)}(h)\Bigr)\times\\
\notag&\qquad\qquad\times\bigl[\calT^{(s+ kh)}(h)\calS^{(s+ kh)}(h)-\mathcal{W}(s+(k+1)h,s+kh)\bigr]\times\\
\notag&\qquad\qquad\times \Bigl(\prod_{j=0}^{k-1}\mathcal{W}(s+(j+1)h,s+jh)\Bigr)\tvect{x}{f}\Biggr)=\\
\notag&\qquad=\sum_{k=0}^{n-1}\Biggl(\Bigl(\prod_{j=k+1}^{n-1} \calT^{(s+ jh)}(h)\calS^{(s+ jh)}(h)\Bigr)\times\\
\notag&\qquad\qquad\times\bigl[\calT^{(s+ kh)}(h)\calS^{(s+ kh)}(h)-\mathcal{W}(s+(k+1)h,s+kh)\bigr]\times\\
\notag&\qquad\qquad\times \mathcal{W}(s+kh,s)\tvect{x}{f}\Biggr).
\end{align}
For $$\tvect{x_k}{f_k}:=\mathcal{W}(s+kh,s)\tvect{x}{f}$$ we have
$\|x_k\|,\|f_k\|\leq C\|\tvect{x}{f}\|$. Therefore we can conclude  from Theorem \ref{thm:approxt2} that
$$
\Bigl\|\Bigl(\calT^{(s+ kh)}(h)\calS^{(s+ kh)}(h)-\mathcal{W}(s+(k+1)h,s+kh)\Bigr)\tvect{x_k}{f_k}\Bigr\|\leq Ch^2\bigl(\|x\|+\|f\|_1+\|f'\|_1\bigr)
$$
holds for all $k=0,\dots, n-1$. From this and from \eqref{eq:telenonaut} it follows
$$
\Bigl\|\prod_{j=0}^{n-1} \calT^{(s+jh)}(h)\calS^{(s+ jh)}(h)\tvect{x}{f}-\mathcal{W}(t,s)\tvect{x}{f}\Bigr\|\leq nCh^2\bigl(\|x_k\|+\|f_k\|_1+\|f_k'\|_1\bigr),
$$
hence the assertion.
\end{proof}


\section{Numerical examples}

In this section we present some numerical experiments obtained by the scheme described in Theorem \ref{thm:general} and in \eqref{eq:splitting_procedure_a}. The program code is a modification of the code described in Csom\'{o}s and Nickel \cite{Csomos-Nickel}. In order to check the convergence of the numerical scheme in the nonautonomous case, and compare the solutions in the autonomous and nonautonomous cases, we will investigate the following examples.

\begin{example}
Let $X=\RR$, $B=b\in\RR$ and consider
\begin{equation*}
\left\{
\begin{aligned}
\tfrac{\dd}{\dd t} u(t)&=bu(t)+\displaystyle\int_{-1}^{0} \mu(t,\sigma)u(t+\sigma)\dd\sigma, \qquad t\ge 0, \\
u(0)&=x\in\RR, \\
u_0&=f\in\Ell^1([-1,0];\RR),
\end{aligned}
\right.
\end{equation*}
for some $\mu\in\Ell^\infty(\RR\times [-1,0])$. In this case the delay operator $\Phi(t)$ is defined by
\begin{equation*}
\Phi(t) g=\int_{-1}^{0} \mu(t,\sigma)g(\sigma)\dd\sigma
\end{equation*}
for all $g\in\Ell^1([-1,0];\RR)$. Let us choose the initial values as $x=1$ and $f(\sigma)=1-\sigma$ for $\sigma\in[-1,0]$, and $b=-1$. As a particular example we choose the following functions $\mu$:
\begin{enumerate}[a)]
\item $\mu(t,\sigma)=1$ in the autonomous case, and
\item $\mu(t,\sigma)=1-\sin t$ in the nonautonomous case,
\end{enumerate}
for $t\ge 0$ and $\sigma\in[-1,0]$.
\label{exm:1}
\end{example}

Though point delays do not fit into the framework of the theoretical part of this paper, it is worth to investigate how this splitting method performs for them.
\begin{example}
Let us consider $X=\RR$, $B=b\in\RR$ and
\begin{equation}
\left\{
\begin{aligned}
\tfrac{\dd}{\dd t} u(t)&=bu(t)+\mu(t)u(t-1), \qquad t\ge 0, \\
u(0)&=x\in\RR, \\
u_0&=f\in\Ell^1([-1,0];\RR).
\end{aligned}
\right.
\nonumber
\end{equation}
The delay operator in this case is
\begin{equation}
\Phi(t) g=\mu(t)g(-1)
\nonumber
\end{equation}
for all $g\in\W^{1,1}([-1,0];\RR)$. Let us choose the initial values again as $x=1$ and $f(\sigma)=1-\sigma$ for $\sigma\in[-1,0)$, and $b=-1$. As above, we consider again the functions  $\mu$:
\begin{enumerate}[a)]
\item $\mu(t,\sigma)=1$ in the autonomous case, and
\item $\mu(t,\sigma)=1-\sin t$ in the nonautonomous case,
\end{enumerate}
for $t\ge 0$ and $\sigma\in[-1,0]$.
\label{exm:2}
\end{example}


\subsubsection*{Convergence of the numerical scheme} In order to examine the convergence of the numerical scheme described in Theorem \ref{thm:general}, we plot the values of the split solution $u_n$ obtained by the formula
\begin{equation*}
u_n=\pi_1 \prod_{p=0}^{n-1} \begin{pmatrix} V^{(s+ph)}(h) & 0 \\ V^{(s+ph)}_{h} & \LS(h) \end{pmatrix} \begin{pmatrix} I & h\Phi(s+ph) \\ 0 & \widetilde I \end{pmatrix} \tvect{x}{f}
\end{equation*}
for different values of time steps $h$. The results for the nonautonomous case are shown on Figure \ref{fig:conv}. One can see that the split solutions converges to the exact solution if $h$ decreases. The convergence of this numerical scheme in the autonomous case has been already investigated  in Csom\'os, Nickel \cite[Section 5.3]{Csomos-Nickel}.
\begin{figure}[H]
\begin{center}
\begin{tabular}{cc}
\resizebox{6cm}{!}{\PSforPDF{\includegraphics{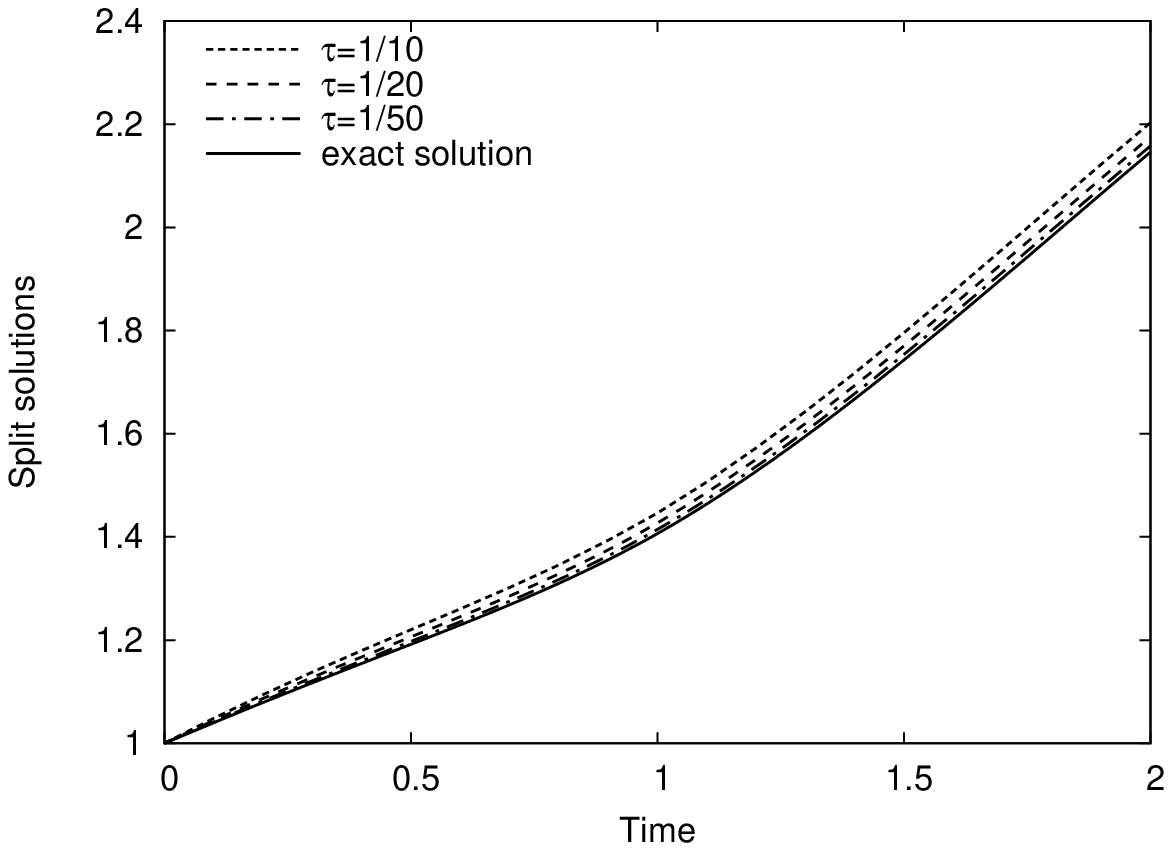}}} & \resizebox{6cm}{!}{\PSforPDF{\includegraphics{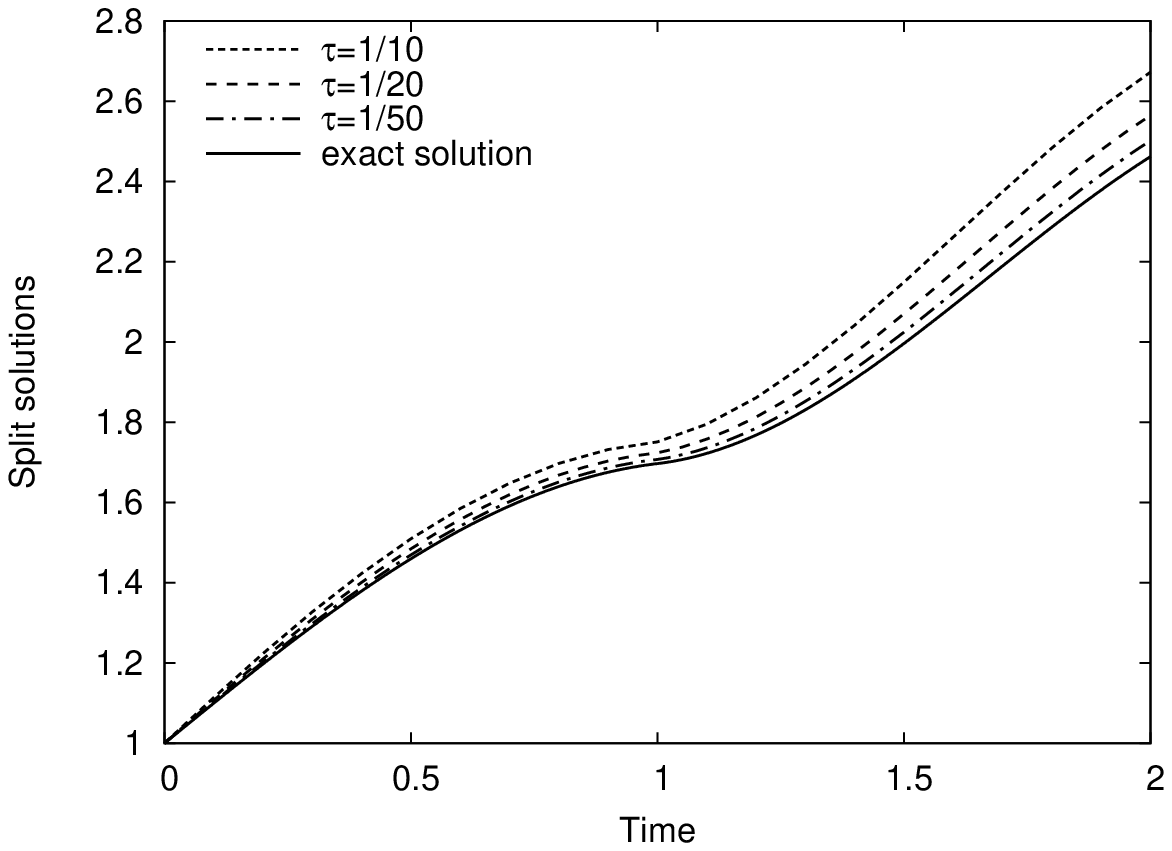}}}
\end{tabular}
\caption{\label{fig:conv} \footnotesize Results on the convergence of the numerical scheme for the nonautonomous delay equation with delay functions as in Example \ref{exm:1} (left panel) and Example \ref{exm:2} (right panel).}
\end{center}
\end{figure}

\subsubsection*{Long-time behaviour (difference between autonomous and nonautonomous cases)}
The long-time behaviour of split solutions $u_n$ of the autonomous and non\-auto\-no\-mous delay equations is shown on Figure \ref{fig:longterm} for the delay functions in Examples \ref{exm:1} (left panel) and \ref{exm:2} (right panel). It can be clearly seen that in the case of the nonautonomous equation the difference in the delay functions does not play any qualitative role, because the effect of the function $\mu$ (i.e. the sin wave) suppresses it.
\begin{figure}[H]
\begin{center}
\begin{tabular}{cc}
\resizebox{6cm}{!}{\PSforPDF{\includegraphics{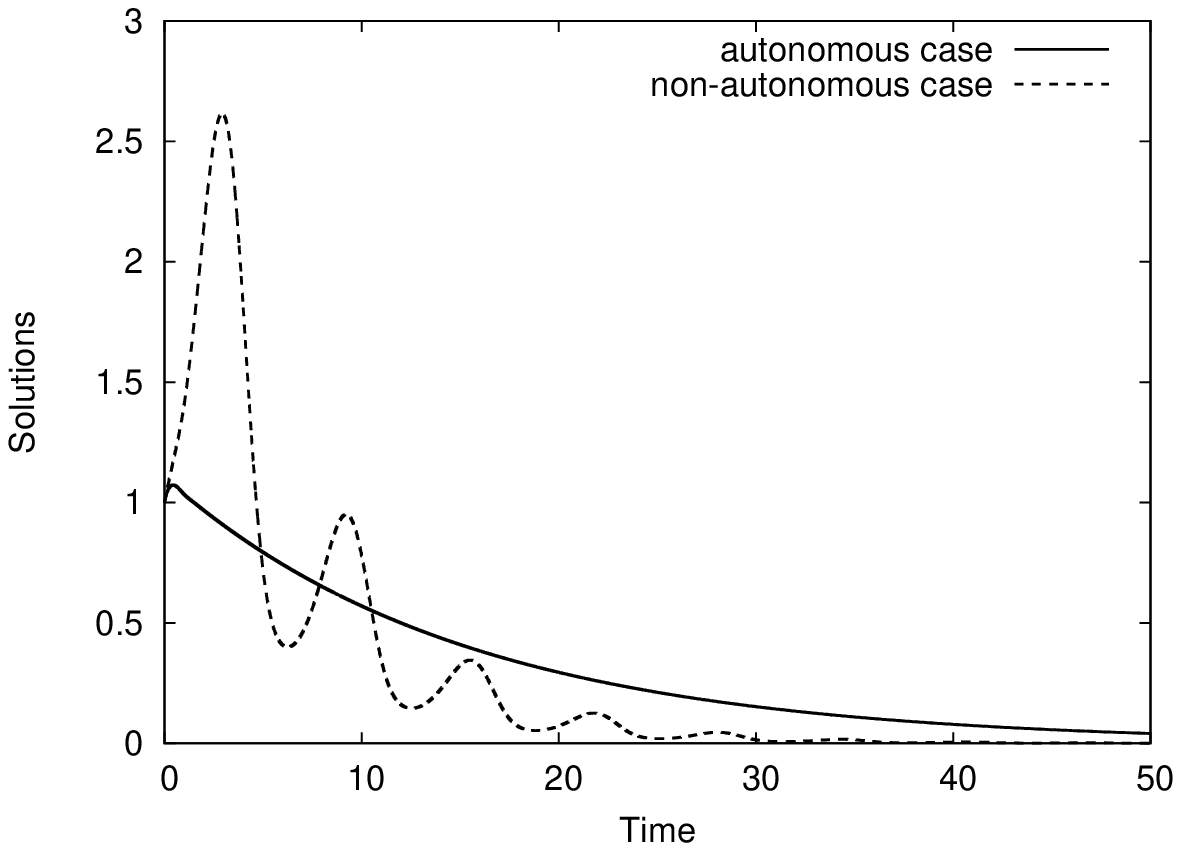}}} & \resizebox{6cm}{!}{\PSforPDF{\includegraphics{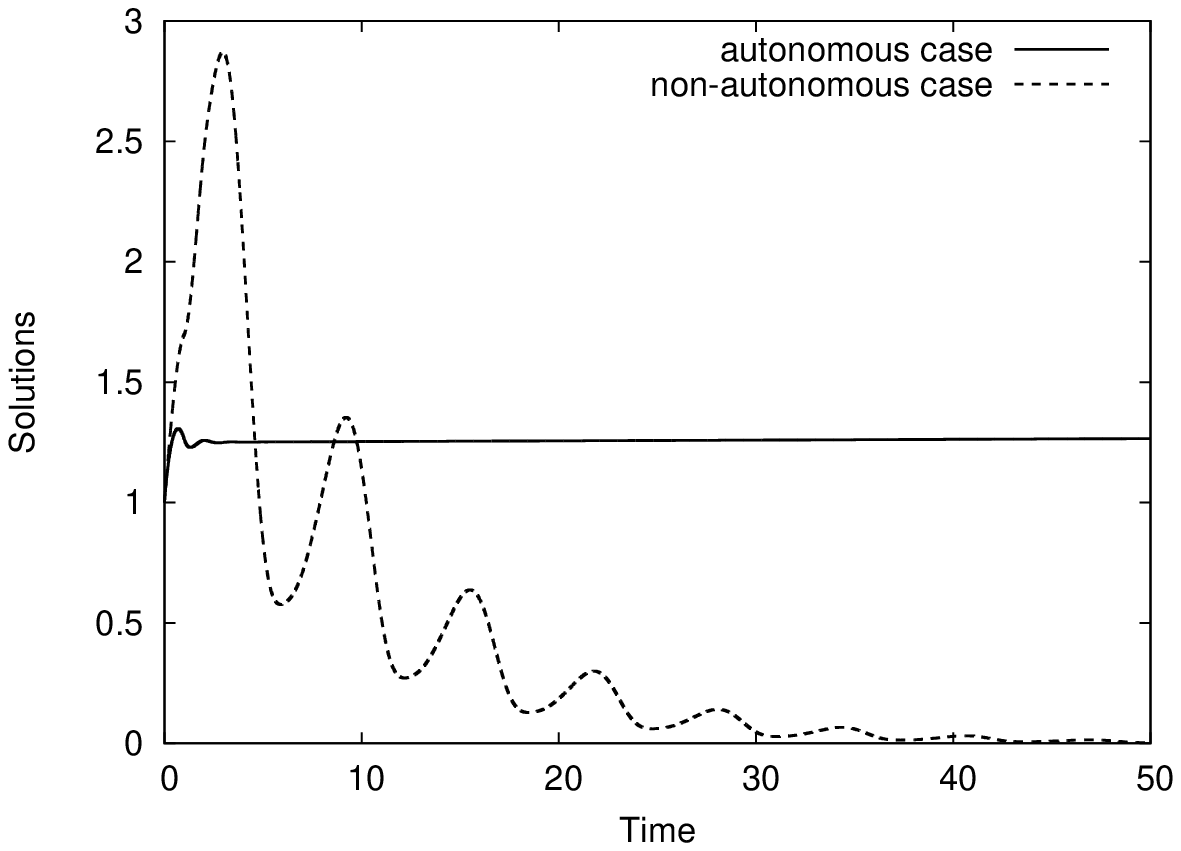}}}
\end{tabular}
\caption{\label{fig:longterm} \footnotesize Long-time behaviour of split solutions of the autonomous and nonautonomous delay equations with delay functions shown in Example \ref{exm:1} (left panel) and Example \ref{exm:2} (right panel).}
\end{center}
\end{figure}

\subsubsection*{Difference between delay functions in Examples \ref{exm:1} and \ref{exm:2}}
On Figure \ref{fig:delayfc} the effect of the different delay functions are shown in the autonomous and nonautonomous cases, respectively. As we have already seen, the effect of the delay function is suppressed by the sin wave of the function $\mu$ in the nonautonomous case. In the autonomous case, however, the (structure of the) delay function $\Phi(t)$ plays an important role.
\begin{figure}[H]
\begin{center}
\begin{tabular}{cc}
\resizebox{6cm}{!}{\PSforPDF{\includegraphics{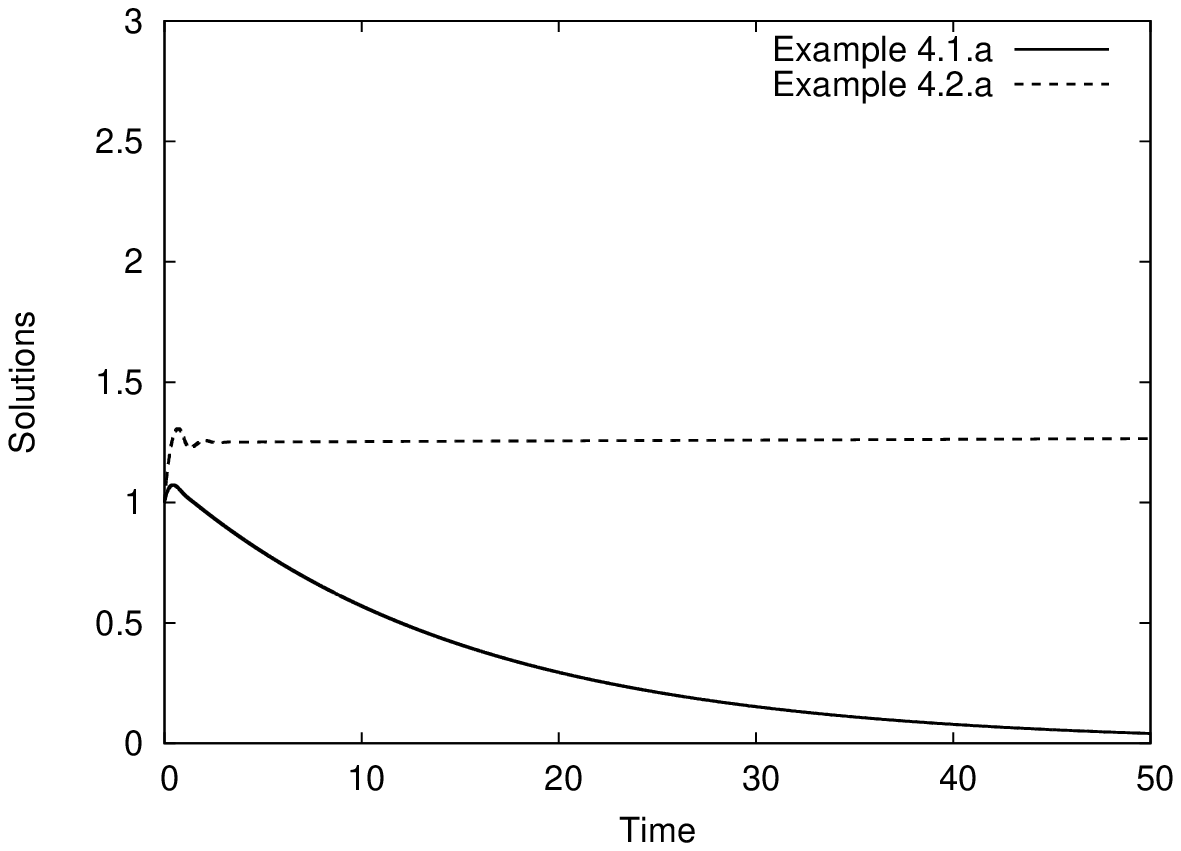}}} & \resizebox{6cm}{!}{\PSforPDF{\includegraphics{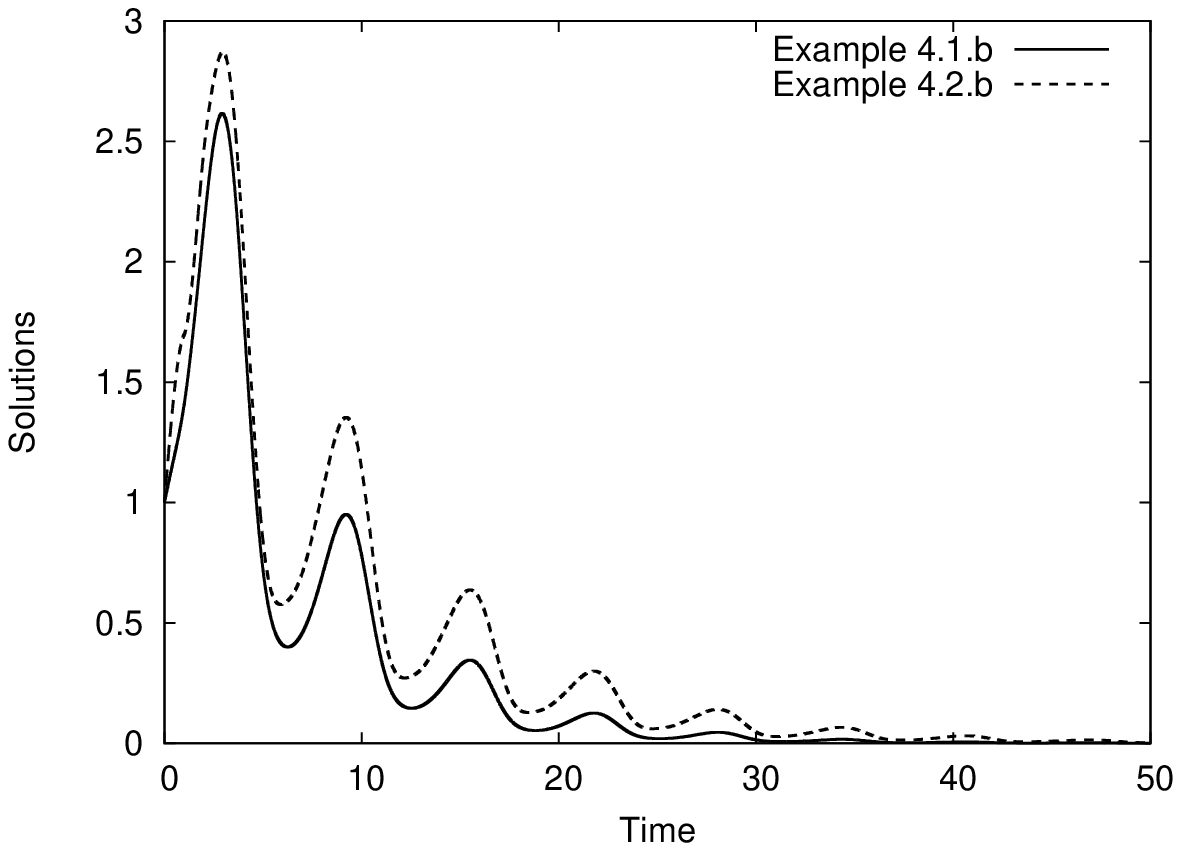}}}
\end{tabular}
\caption{\label{fig:delayfc} \footnotesize Effect of the different delay functions in the autonomous (left panel) and nonautonomous (right panel) cases.}
\end{center}
\end{figure}


\section*{Acknowledgments}
A.~B.~was supported by the Alexander von Humboldt-Stiftung and by the OTKA grant Nr. K81403. The European Union and the European Social Fund have provided financial support to the project under the grant agreement no. T\'{A}MOP-4.2.1/B-09/1/KMR-2010-0003. During the preparation of the paper B.~F.~was supported by the J\'{a}nos Bolyai Research Scholarship of the Hungarian Academy of Sciences. The financial support of the ``Stiftung Aktion \"Osterreich-Ungarn'' is gratefully acknowledged.

\parindent0pt
%

\end{document}